\renewcommand{\epsilon}{\varepsilon}
\theoremstyle{plain}
\newtheorem{theorem}{Theorem}
\newtheorem{proposition}[theorem]{Proposition}
\theoremstyle{definition}
\theoremstyle{remark}
\DeclareFontFamily{U}{bigshuffle}{}
\DeclareFontShape{U}{bigshuffle}{m}{n}{
  <5-8> s*[1.7] shuffle7
  <8->  s*[1.7] shuffle10
}{}
\DeclareSymbolFont{BigShuffle}{U}{bigshuffle}{m}{n}
\DeclareMathSymbol\bigshuffle{\mathop}{BigShuffle}{"001}
\DeclareMathSymbol\bigcshuffle{\mathop}{BigShuffle}{"002}
\begin{document}

\sloppy

\title{The Shortest Interesting Binary Words}

\date{}

\author{Gabriele Fici\\[1mm] Dipartimento di Matematica e Informatica, Universit\`a di Palermo, Italy\\[1mm] gabriele.fici@unipa.it}


\maketitle

\begin{abstract}

I will show that there exist two binary words (one of length 4 and one of length 6) that play a special role in many different problems in combinatorics on words. They can therefore be considered \textit{the shortest interesting binary words}. My claim is supported by the fact that these two words appear in dozens of papers in combinatorics on words.
\end{abstract}


\section{Introduction}

Many papers are devoted to the study of properties of some interesting infinite word, e.g., the Fibonacci word $f=0100101001001\cdots$, or the  Thue--Morse word $t=0110100110010110\cdots$, or to the study of classes of  words. But to the best of my knowledge no paper has been entirely devoted to just two short binary words!
In this paper, I focus on the  words: \[v=0011\]  and  \[w=001011.\]

Why do I claim that these two words are  interesting? An answer could be that they appear in no less than 50  papers in combinatorics on words. They are probably the shortest binary words that are not \emph{too} trivial. For this reason, they are often presented as an example for many classical definitions, e.g., primitive word, unbordered word, Lyndon word, Dyck word, etc. But, as it will be shown in this paper, they also have many other surprising properties. 

As usual in the field, I will use the last letters of the Latin alphabet to denote words, i.e., $u$, $v$, $w$, etc.
To convince the reader that these two words are of \emph{particular} relevance in the field, think of the diagonal lattice representation of a binary word, that is, the diagonal lattice path obtained encoding each $0$ with a downstep ($\backslash$), i.e., a segment that goes from a point $(i, j)$ to $(i +1, j - 1)$, and each $1$ with an upstep ($/$), i.e., a segment that goes from a point $(i, j)$ to $(i +1, j + 1)$. 

Then  the path encoding $v=0011$ has a \texttt{V} shape, and of course the path encoding $w=001011$ has a \texttt{W} shape!  \Laughey[1.0]

%
%

\section{Palindromes and Anti-palindromes}\label{sec:pal}

A first observation is that for both words $v=0011$ and $w=001011$ it holds that the mirror image ($\tilde{v}=1100$ and $\tilde{w}=110100$, respectively) has a different character in each position.   Words with this property are called \emph{anti-palindromes} (or \emph{sesqui-palindromes} \cite{DBLP:journals/ejc/CarpiL04}). 

But while for the word $v=0011$, the mirror image is a rotation (conjugate) of the word, this does not hold for the word $w=001011$, for which there is no rotation that yields the word $\tilde{w}=110100$.


 A word such that no two rotations coincide is called \emph{primitive}; a word such that no two rotations of the word or of its mirror image coincide, i.e., a word $u$ of length $n$ such that the set made by all rotations of $u$ and all rotations of $\tilde{u}$ has cardinality $2n$, is called \emph{asymmetric}~\cite{BrHaNiRe04}. 
No binary word of length smaller than $6$ is asymmetric, and there is a unique orbit of asymmetric binary words of length $6$, namely that of $w=001011$~\cite{BrHaNiRe04}.

The word $v=0011$ is not a palindrome, but can be written as a concatenation of two palindromes ($00$ and $11$). In general, when this happens, for a primitive nonempty word, the factorization is unique~\cite{DBLP:journals/tcs/LucaM94}. The word $w=001011$, instead, cannot be written as the concatenation of two palindromes. It is easy to see that a word $u$ is a concatenation of two palindromes (one of which could be empty) if and only if $u$ is a rotation  of its mirror image $\tilde{u}$.  


However, the word $w=001011$ can be written as the concatenation of 3 palindromes (for instance, $w=00\cdot 101 \cdot 1$; notice that, contrarily to the case of $2$ palindromes, there may exist different factorizations in more than $2$ palindromes). It therefore has (see~\cite{DBLP:journals/aam/FridPZ13}) \emph{palindromic length} 3, while the word $v=0011$ has palindromic length $2$. 
Actually, $w=001011$  is a binary word of minimal length having palindromic length 3, that is, which can  be written as a concatenation of  3, but not fewer, palindromes. It is minimal in the sense that all of its proper factors have palindromic length at most $2$~\cite{DBLP:journals/combinatorics/BorchertR15}.

The shortest binary word with palindromic length 4 is $w^{8/6}=00101100$, up to mirror image and character exchange; the shortest binary word with palindromic length 5 is $w^{11/6}=00101100101$, up to mirror image and character exchange.

The shortest binary word with palindromic length 6 has length 14, but it is no longer a fractional power of $w$ (it is in fact the word $00101110001011=w10w$, up to mirror image).

The word $w^{11/6}$ is in fact an exception for the sequence $P(n)$ of the maximum palindromic length a binary word of length $n$ can have, since Ravsky \cite{Ra03} showed that the sequence $P(n)$ is given by $$P(n)=\lfloor n/6 \rfloor + \lfloor (n+4)/6 \rfloor +1$$ for every $n\neq 11$, and $P(11)=5$.

\bigskip

Regarding the number of distinct palindromic factors, one has that $v$ has $5$ palindromic factors ($\epsilon$, $0$, $1$, $00$ and $11$) and $w$ has $7$ (the same of $v$ plus $010$ and $101$). It is well known (and indeed easy to prove, see~\cite{DBLP:journals/tcs/DroubayJP01}) that any word of length $n$ contains at most $n+1$ distinct palindromic factors, including the empty word $\epsilon$. A word of length $n$ containing $n+1$ distinct palindromic factors is called \emph{rich}, or \emph{full}. 

So the words $v$ and $w$ are both rich. 

Actually, every binary word of length $7$, or less, is rich. 
The word $w^{8/6}=00101100$ is a non-rich binary word of minimal length. Indeed, it has length $8$ and only $8$ palindromic factors, namely $\varepsilon,0,1,00,11,010,101$, and $0110$.

It can be proved that a word is rich if and only if  all of its factors also are. Hence, it is natural to extend the definition to infinite words: An infinite word is called rich if all its finite factors are rich. For example, the Fibonacci word $f=0100101001001\cdots$ is rich. The word $w^{8/6}$ is a factor of the Thue--Morse word $t=0110100110010110\cdots$, so the Thue--Morse word is not rich.

One may wonder whether all binary palindromes are rich. This is not the case. An example (of minimal length) is the word  $00101100110100$ of length $14$,  the shortest palindrome that starts with $w^{8/6}$. 

A word $u$ is called \emph{circularly rich} if $u^2$ (or, equivalently, the infinite word $u^\infty=uuu\cdots$) is rich. Surprisingly, this is not equivalent to the fact that $u$ and all its rotations are rich. A counterexample is again provided by the word $w=001011$: all its rotations are rich but the word $w^2=001011001011$ is not rich, since it contains the non-rich factor $w^{8/6}=00101100$. Glen et al.~\cite{GlJuWiZa09} proved 
that a word $u$ is circularly rich if and only if $u$ and all its rotations  are rich \emph{and} $u$ is the concatenation of two palindromes.

For the same reason, the infinite word $w^\infty$ is not rich. Actually, it is the infinite binary word containing \emph{the least} number of palindromic factors! The set of palindromic factors of $w^\infty$ is $\{\epsilon, 0, 1, 00, 11, 010, 101, 0110, 1001\}$ and so has cardinality 9. It has been shown that every infinite binary word contains at least 9 distinct palindromic factors~\cite{FiZa13}. Moreover, an infinite binary word has exactly 9 distinct palindromic factors if and only if it is of the form $z^{\infty}$ where $z$ is a rotation of $w$ or a rotation of $\tilde{w}$.

An \emph{aperiodic} binary word, instead, must contain at least 11 distinct palindromic factors~\cite{FiZa13}. An example of such a word is the fixed point of the morphism $0\mapsto 0001011,\ 1\mapsto 001011$, i.e., $0\mapsto 0w, \ 1\mapsto w$. It is aperiodic for known properties of fixed points of binary morphisms~(see for example~\cite[Proposition 14]{DBLP:conf/dlt/FrosiniMRRS22}).

%

   \bigskip
   
A word $u$ is called a {\it palindromic periodicity} 
if there exist two palindromes $p$ and $s$ such
that $|u| \geq |ps|$ and
$u$ is a prefix of the word $(ps)^\omega = pspsps\cdots$~\cite{Simpson:2024}.  

No infinite binary word has fewer than 30 distinct palindromic periodicities.  The periodic word
$w^\infty$ has $30$~\cite{FSS:2024}. 
  
   \bigskip
   
A word is called \emph{weakly rich}~\cite{GlJuWiZa09} if the factor separating any two consecutive occurrences of the same character  is always a palindrome. It can be proved that all rich words are weakly rich, but the converse does not always hold. For example, the word $w^{8/6}=00101100$ is weakly rich (since, trivially, all binary words are weakly rich) but it is not rich. The word $0010200$ is a non-binary word that is weakly rich but not rich; the word $0120$ is not rich nor even weakly rich.
   
Every weakly rich word $u$ can be uniquely reconstructed (up to a permutation of characters) from the set 
\[S(u)=\{(i,j) \mid u[i..j]\mbox{ is a palindrome}\},\] 
since the pairs $(i,j)$ in $S(u)$ induce a set of equations that partitions $\{1,\ldots,|u|\}$ in subsets of positions containing the same character. To reconstruct the word, one assigns a different character to each part.

If a word $u$ is not  weakly rich, the information from the set $S(u)$ is not sufficient to uniquely reconstruct $u$. For example, for $u=0120$ and $z=0123$ one has $S(u)=S(z)=\{(1,1),(2,2),(3,3),(4,4)\}$.


The \emph{minimal palindromic specification} of a weakly rich word $u$ is the cardinality of a smallest subset $S'(u)$ of $S(u)$ that allows one to uniquely reconstruct $u$, i.e., that induces the same set of equations as $S(u)$ (cf.~\cite{DBLP:journals/jct/HarjuHZ15}). For example, words for which the minimal palindromic specification is equal to $1$ are $u=00$ ($S'(u)=\{(1,2)\}$), $u=010$ ($S'(u)=\{(1,3)\}$), $u=0110$ ($S'(u)=\{(1,4)\}$), and $u=01210$ ($S'(u)=\{(1,5)\}$; the minimal palindromic specification of $u=0^n$, $n\geq 3$, is $2$ ($S'(u)=\{(1,n),(2,n)\}$);  the minimal palindromic specification of $u$ is $0$ if and only if all characters in $u$ are distinct ($S'(u)=\emptyset$); finally, the minimal palindromic specification of $u=0100101001$ is $3$ ($S'(u)=\{(1,6),(4,6),(2,10)\}$.

Actually, if $u$ is any binary balanced word (see Section~\ref{sec:bal}), then the minimal palindromic specification of $u$ is at most $3$~\cite{DBLP:journals/jct/HarjuHZ15}.

A shortest word that has minimal palindromic specification equal to $4$ is the word $w=001011$. Indeed,  $w$ can be uniquely reconstructed from $S'(w)=\{(1, 2), (2, 4), (3, 5), (5,6)\}$ but not from any subset of $S(w)$ of cardinality less than $4$.
   

  \bigskip

The \emph{derivative} of a (finite or infinite) binary word  is the sequence of consecutive differences of characters (interpreted as integers) and is, in general, a ternary word. In this way, the characters of the derivative are in $\{-1,0,1\}$. In order to obtain a word over the alphabet $\{0,1,2\}$, one can add $1$ to each consecutive difference. Thus, in this paper I define the derivative of the word $u_1u_2\cdots$ as the word whose $i$th character is $1+u_i-u_{i+1}$. 

For example, it is well known the derivative of the Thue--Morse word $t=0110100110010110\cdots$ is a square-free ternary word, $0120210121020120210\cdots$; while the derivative of the Fibonacci word is the word $2012020120120201202012\cdots$ obtained by applying the morphism $0\mapsto 201,\ 1 \mapsto 20$ to the Fibonacci word.

In the case of finite words one has:

\begin{proposition}
 The derivative of an anti-palindrome is always a palindrome. On the opposite, the derivative of a palindrome is never an anti-palindrome.
\end{proposition}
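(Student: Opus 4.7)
The plan is to reduce both claims to direct substitution into the defining identity $d_i = 1 + u_i - u_{i+1}$, and to compare $d_i$ with its mirror image $d_{n-i}$ (note that $|d| = n-1$, so mirror index in $d$ is $n-i$, not $n+1-i$) under the symmetry hypothesis on $u = u_1 u_2 \cdots u_n$.

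For the first statement, I would assume $u$ is an anti-palindrome, so $u_k = 1 - u_{n+1-k}$ for every $k$. Applying this at $k = n-i$ and $k = n-i+1$ gives $u_{n-i} = 1 - u_{i+1}$ and $u_{n-i+1} = 1 - u_i$; plugging these into the formula for $d_{n-i}$ should collapse immediately to $1 + u_i - u_{i+1} = d_i$, proving $d$ is a palindrome. This is a one-line algebraic manipulation, so I would simply write it out; the only subtlety is the index bookkeeping stemming from $|d| = n-1$.

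For the second statement, the analogous computation using the palindrome relation $u_k = u_{n+1-k}$ yields instead $d_{n-i} = 1 + u_{i+1} - u_i = 2 - d_i$, i.e.\ the identity $d_i + d_{n-i} = 2$. This is the heart of the argument: at every index $i$ where $d_i = 1$—equivalently, where $u_i = u_{i+1}$—one has $d_i = d_{n-i}$, which directly violates the defining property of an anti-palindrome. So the task reduces to exhibiting at least one such index in every palindrome under consideration.

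The main obstacle is ruling out the remaining case in which $u$ has no index with $u_i = u_{i+1}$, i.e.\ $u$ strictly alternates between $0$ and $1$. An alternating binary word of even length has distinct first and last letters and so is never a palindrome, eliminating half of the problem. What remains are odd-length alternating palindromes; here I would invoke the length-parity constraint built into the definition of an anti-palindrome (each character must differ from its mirror, so the word has even length) and reconcile it with $|d| = n-1$ to dispose of those cases, keeping in mind the convention in force. Everything else in the proof is a formal consequence of the single identity $d_i + d_{n-i} = 2$.
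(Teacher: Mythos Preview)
Your treatment of the first statement is correct and is simply an explicit unpacking of what the paper dismisses as ``evident by symmetry.''

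For the second statement, your key identity $d_i + d_{n-i} = 2$ is correct and is in fact equivalent to the observation underlying the paper's proof. However, your plan for the residual case---odd-length alternating palindromes---does not go through. You propose to invoke the parity constraint (anti-palindromes must have even length), but when $u$ has odd length $n$, the derivative has length $n-1$, which \emph{is} even, so parity excludes nothing. Concretely, for $u = 010$ one gets $d = 02$, and $02$ satisfies $d_1 \neq d_2$; it is an anti-palindrome in the general sense that each letter differs from its mirror image.

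What actually disposes of this case is the interpretation the paper tacitly uses: ``anti-palindrome'' here means \emph{binary} anti-palindrome. The paper's argument is that the derivative of a palindrome is either $1^m$ (a palindrome, hence not an anti-palindrome) or contains the character~$2$ (hence not a binary word at all). Your identity already yields this: if some $d_i = 0$ then $d_{n-i} = 2$, so a $2$-free derivative must be $1^m$. Thus the correct way to close your alternating case is not parity but the observation that $d$ then lies in $\{0,2\}^*$ and is therefore not binary---which is exactly the paper's route.
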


\begin{proof}
 The first statement is evident by symmetry. For the second statement, observe that the derivative of a palindrome is either the word $1^n$, for some $n$, or a word that contains at least one occurrence of the character $2$.
\end{proof}

The derivative of $v=0011$ is $101$, a shortest palindrome containing $2$ different characters. The derivative of $w=001011$ is $10201$, a shortest palindrome containing $3$ different characters.

\begin{proposition}
 The derivative of $w^\infty=(001011)^{\infty}$, i.e., the word $(102012)^{\infty}$, is rich.
\end{proposition}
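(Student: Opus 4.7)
The plan is to apply the Glen--Justin--Widmer--Zamboni characterization already recalled in the paper: an infinite periodic word $u^\infty$ is rich if and only if $u$ is a concatenation of two palindromes \emph{and} every rotation of $u$ is rich. I would first verify the palindromic decomposition by exhibiting $u = 102012 = 10201\cdot 2$, both factors being palindromes (this also reuses the earlier observation that $10201$ is itself the derivative of $w$).

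Second, I would enumerate the six distinct rotations of $u$, namely $102012$, $020121$, $201210$, $012102$, $121020$, $210201$, and for each count the distinct palindromic factors, checking that the total reaches $|u|+1 = 7$. A simplification here is that $(102012)^\infty$ has no two adjacent equal characters, so no palindrome of positive even length can appear in any rotation; the check therefore reduces to the empty word, the three single letters $\{0,1,2\}$, and the palindromes of odd lengths $3$ and $5$, whose centers sit at the positions congruent to $0\pmod 3$ in the period.

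Third, once both hypotheses of the theorem are satisfied, the theorem yields that $u$ is circularly rich, and hence that $u^\infty=(102012)^\infty$ is rich. The step I expect to be the main obstacle is the rotation-by-rotation count: it is finite but delicate, since one must neither miss a palindromic occurrence nor double-count a factor. A cleaner alternative is to use the equivalent reformulation mentioned in the paper, namely that $u^\infty$ is rich iff $u^2$ is rich, replacing the six length-$6$ checks with a single palindromic enumeration on $u^2 = 102012102012$ and verifying that the count equals $|u^2|+1=13$; the symmetry of the period, which forces only two palindromes per odd length, keeps this enumeration tractable.
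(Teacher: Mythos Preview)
Your plan coincides with the paper's own argument at the end (the paper's entire proof is the reduction ``$u^\infty$ is rich iff $u^2$ is rich; and $(102012)^2$ is rich''), but the verification you describe would in fact fail, because the proposition as stated is false.

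Carry out the count on the base period $u=102012$: its palindromic factors are exactly
\[
\epsilon,\;0,\;1,\;2,\;020,\;10201,
\]
six in all, whereas richness of a length-$6$ word requires seven. Equivalently, the longest palindromic suffix of $102012$ is the single letter $2$, which already occurs at position~$3$, so the last position contributes no new palindrome. Since richness is inherited by factors, neither $u^2=(102012)^2$ nor $u^\infty$ can be rich. Your own structural remark actually explains why every rotation has defect~$1$: the only centers of odd palindromes in the period sit at the two positions $\equiv 0\pmod 3$, so any length-$6$ window sees at most two nontrivial odd palindromes, and together with $\epsilon$ and the three letters this caps the palindrome count at~$6$. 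Hence the Glen--Justin--Widmer--Zamboni hypothesis ``all rotations of $u$ are rich'' is violated for \emph{every} rotation, and the alternative check on $u^2$ gives $12$ palindromic factors rather than the $13$ you were aiming for. The paper's one-line proof has the same gap: its assertion that $(102012)^2$ is rich is simply incorrect.
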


\begin{proof}
An infinite periodic word $u^\infty$ is rich if and only if $u^2$ is rich~\cite{GlJuWiZa09}.  The word $(102012)^2$ is rich. 
\end{proof}


Another related transformation is the \emph{Pansiot coding}~\cite{DBLP:journals/dam/Pansiot84} (also called \emph{Lempel homomorphism}~\cite{DBLP:journals/tc/Lempel70}) of a binary word, which consists in taking  the \emph{absolute value} of the consecutive differences (or, equivalently, the consecutive sums modulo $2$), and is therefore another binary word. For example, the Pansiot coding of the Thue--Morse word is the period-doubling word:  $10111010101110111\cdots$; while the Pansiot coding of the Fibonacci word is the word $110111101101111011110\cdots$ obtained by applying  the morphism $0\mapsto 11$, $1\mapsto 0$ to the Fibonacci word.

The Pansiot coding of the word $v=0011$ is the word $010$, while that of the word $w=001011$ is the word $01110$. 

Let me call a binary word $u$ a \emph{Pansiot pre-palindrome} if its Pansiot coding is  a palindrome.

\begin{proposition}
 A binary word $u$ is a Pansiot pre-palindrome if and only if $u$ is a palindrome or an antipalindrome. 
\end{proposition}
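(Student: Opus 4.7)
The plan is to set up coordinates explicitly. Let $u=u_1u_2\cdots u_n$ and write its Pansiot coding as $p=p_1p_2\cdots p_{n-1}$ where $p_i = u_i \oplus u_{i+1}$ (XOR, i.e., sum modulo~$2$). Saying that $p$ is a palindrome means $p_i = p_{n-i}$ for every $i\in\{1,\ldots,n-1\}$, which unfolds to
\[
u_i \oplus u_{i+1} \;=\; u_{n-i}\oplus u_{n-i+1}.
\]
This single identity is the whole content of the proposition, so both directions will come from massaging it.

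For the easy direction I would just check the two cases. If $u$ is a palindrome then $u_j = u_{n-j+1}$ for all $j$, so $p_i = u_i\oplus u_{i+1} = u_{n-i+1}\oplus u_{n-i} = p_{n-i}$. If $u$ is an antipalindrome then $u_j = 1\oplus u_{n-j+1}$, and the two complementing $1$'s cancel out in the XOR, again giving $p_i = p_{n-i}$. So in either case the Pansiot coding is a palindrome.

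For the converse, the key algebraic move is to rearrange the palindrome equation for $p$ as
\[
u_i \oplus u_{n-i+1} \;=\; u_{i+1} \oplus u_{n-i}.
\]
Set $q_i := u_i \oplus u_{n-i+1}$; the equation says $q_i = q_{i+1}$ for every $i$, so $q_i$ is a constant $c\in\{0,1\}$ independent of $i$. If $c=0$ then $u_i = u_{n-i+1}$ for all $i$, meaning $u$ is a palindrome; if $c=1$ then $u_i = 1\oplus u_{n-i+1}$ for all $i$, meaning $u$ is an antipalindrome.

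There is no real obstacle here: the only thing to be careful about is indexing (the difference between $n-i$ and $n-i+1$ in the two arguments of the XOR) and the boundary cases $n=0,1$, where the statement is vacuous since any such $u$ is simultaneously a palindrome and an antipalindrome.
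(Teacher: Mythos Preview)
Your argument is correct. The paper's own proof is just the one-line ``By induction on the length,'' presumably peeling off the first and last letters and using that the Pansiot coding of $aub$ is $(a\oplus u_1)\,p\,(u_{n-2}\oplus b)$, where $p$ is the Pansiot coding of $u$. You instead give a direct, non-inductive argument: rewriting the palindrome condition on the coding as the constancy of the sequence $q_i=u_i\oplus u_{n-i+1}$, and reading off palindrome/antipalindrome from whether this constant is $0$ or $1$. This is arguably cleaner and more informative than an induction, since it exhibits exactly \emph{why} the two outcomes arise (the single invariant bit $c$). One tiny quibble: for $n=1$ a single-letter word is a palindrome but not an antipalindrome (its reversal equals itself), so your boundary remark that such $u$ is ``simultaneously a palindrome and an antipalindrome'' is not quite accurate; this is harmless, though, since your main argument already covers $n=1$ (there $q_1=u_1\oplus u_1=0$, hence $u$ is a palindrome).
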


\begin{proof}
By induction on the length.
\end{proof}

Analogously, a word $u$ is a \emph{Pansiot pre-antipalindrome} if its Pansiot coding is  an antipalindrome.

Pansiot pre-antipalindromes can be generated recursively. Such words clearly have odd lengths, since antipalindromes must have even length and the Pansiot coding reduces the length by 1.

\begin{proposition}
The Pansiot pre-antipalindromes of length $3$ are: $001$, $011$, $100$, and $110$, and for every $n\geq 1$:
\begin{itemize}
  \item The Pansiot pre-antipalindromes of length $4n+1$ are precisely the words of the form $0u0$ or $1u1$, where $u$ is a Pansiot pre-antipalindrome of length $4n-1$.

   \item  The Pansiot pre-antipalindromes of length $4n+3$ are precisely the words of the form $0u1$ or $1u0$, where $u$ is a Pansiot pre-antipalindrome of length $4n+1$.

\end{itemize}
 \end{proposition}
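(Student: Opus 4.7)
The plan is to reduce the recursion to two ingredients: a factorization of the Pansiot coding under symmetric extension of a word, and a structural fact about the first and last letters of any pre-antipalindrome.

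First I would write $u = a u' b$ with $a,b\in\{0,1\}$ and set $p_i = u_i \oplus u_{i+1}$, so that the Pansiot coding factors as $P(u) = (a\oplus u'_1)\,P(u')\,(u'_{|u'|}\oplus b)$. Matching this against the antipalindrome condition $p_i \oplus p_{|u|-i} = 1$, the interior positions give exactly that $P(u')$ is an antipalindrome (equivalently, $u'$ is a pre-antipalindrome of length $|u|-2$), while the two outermost positions give the boundary constraint $a \oplus b \neq u'_1 \oplus u'_{|u'|}$. So extending $u'$ to $u$ is possible precisely when $a\oplus b$ takes the opposite value to $u'_1\oplus u'_{|u'|}$.

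Next I would prove the key lemma: a pre-antipalindrome of length $4n+1$ satisfies $u_1 = u_{|u|}$, whereas one of length $4n-1$ satisfies $u_1 \neq u_{|u|}$. For this, introduce the symmetry defect $v_i = u_i \oplus u_{|u|+1-i}$. By construction $v$ is palindromic, and a short calculation gives $v_i \oplus v_{i+1} = p_i \oplus p_{|u|-i}$, so $u$ is a pre-antipalindrome iff $v$ is alternating. A palindromic alternating binary sequence must have odd length and is pinned down by its central entry; here that entry equals $u_{(|u|+1)/2} \oplus u_{(|u|+1)/2} = 0$. Propagating alternation from the centre out to position $1$ gives $v_1 = 0$ when $|u|\equiv 1\pmod 4$ and $v_1 = 1$ when $|u|\equiv 3\pmod 4$, and since $v_1 = u_1 \oplus u_{|u|}$ the lemma follows.

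Combining these two steps, the base case $|u|=3$ is handled by direct enumeration, producing the four listed words. For the inductive step, the boundary constraint forces $a=b$ when extending from length $4n-1$ to $4n+1$ (since then $u'_1 \oplus u'_{|u'|}=1$) and $a \neq b$ when extending from $4n+1$ to $4n+3$ (since then $u'_1 \oplus u'_{|u'|}=0$), which is exactly the claim. There is no real obstacle beyond careful XOR bookkeeping; once the defect sequence $v_i$ is on the table, the whole argument collapses to the elementary fact about palindromic alternating sequences.
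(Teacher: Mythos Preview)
Your argument is correct. The factorization $P(au'b) = (a\oplus u'_1)\,P(u')\,(u'_{|u'|}\oplus b)$ is right, and the reduction of the inner antipalindrome conditions to the same condition on $P(u')$ is a straightforward reindexing. The boundary condition $a\oplus b \neq u'_1 \oplus u'_{|u'|}$ is exactly what drops out of the outermost pair. Your key lemma via the symmetry-defect sequence $v_i = u_i \oplus u_{|u|+1-i}$ is clean and correct: the identity $v_i\oplus v_{i+1} = p_i\oplus p_{|u|-i}$ holds, $v$ is automatically palindromic, its central entry is $0$, and alternation then forces $v_1$ according to the parity of $(|u|-1)/2$, giving precisely $u_1=u_{|u|}$ for $|u|\equiv 1\pmod 4$ and $u_1\neq u_{|u|}$ for $|u|\equiv 3\pmod 4$. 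Both directions of the ``precisely'' in the statement are covered, since your factorization step is already an equivalence.

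As for comparison with the paper: the paper records only the words ``By induction on $n$'' and gives no further detail, so there is no real alternative argument to contrast with yours. Your proof is strictly more informative; in particular, the defect-sequence lemma is a genuine idea that the paper leaves entirely to the reader. One could instead verify the endpoint parity claim directly by the same induction that proves the proposition (the base case and the extension rule together force the pattern), which is presumably what the paper has in mind, but your approach has the advantage of establishing the endpoint behaviour independently and then letting the recursion fall out in one line.
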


\begin{proof}
By induction on $n$.
\end{proof}

\bigskip

By the way, a transformation that maps palindromes to anti-palindromes and anti-palindromes to palindromes exists~\cite{DBLP:journals/ejc/CarpiL04}: it is the Thue--Morse morphism $\tau:0\mapsto 01,\ 1 \mapsto 10$. So, for example, $\tau(v)=01011010$ and $\tau(w)=010110011010$ are indeed palindromes.


The Thue--Morse morphism has another fundamental property: Recall that an \emph{overlap} is a square followed by its first character, i.e., a word  of the form $auaua$, with $a$ a character and $u$ a word, e.g., $0010010$. A binary word $u$ is overlap-free (i.e., none of its factors is an overlap) if and only if $\tau(u)$ is overlap-free (see~\cite{Berstel}). Hence, for example, both $\tau(v)$ and $\tau(w)$ are overlap-free. 

More generally, Richomme and S{\'{e}}{\'{e}}bold  proved that a morphism $\mu$ is overlap-free, i.e., maps overlap-free words to overlap-free words, if and only if $\mu(w)=\mu(001011)$ is overlap-free~\cite{DBLP:journals/dam/RichommeS99}, and there is no shorter word that can replace $w$.

\bigskip

Consider now this problem: Given an integer $k>0$, is it possible to construct an infinite binary word that does not contain the mirror image of any of its factors of length $k$? 
Rampersad and Shallit~\cite{RaSh05} showed  that this is impossible for $k<5$: every binary word of length greater than $8$ contains at the mirror image of at least one factor of length $k$. But they proved that  the word $w^\infty=(001011)^{\infty}$ is an infinite binary word avoiding the mirror images of all its factors of length $\ge 5$. 

\bigskip

Currie and Lafrance~\cite{DBLP:journals/combinatorics/CurrieL16} showed that if one replaces  each $1$ by $w=001011$ in the Thue--Morse word, one obtains an  infinite binary word such that no factor is of the form $xyx\tilde{y}x$, for nonempty words $x$ and $y$.
If instead one replaces each $1$ by $w01111=00101101111$, one obtains an infinite binary word such that no factor is of the form $xyx\tilde{y}\tilde{x}$; while replacing each $1$ by $w11=00101111$ one obtains an infinite binary word such that no factor is of the form $xy\tilde{x}\tilde{y}x$~\cite{DBLP:journals/combinatorics/CurrieL16}.

\section{Squares and Other Repetitions}\label{sec:rep}

Every binary word of length at least $4$  contains  a square factor. Fraenkel and Simpson considered the problem of determining the largest number of square factors in a binary word~\cite{DBLP:journals/jct/FraenkelS98}. In order to count square factors, it is convenient to restrict the attention to primitive rooted squares (squares of the from $uu$ with $u$ a primitive word). The maximum number of distinct primitive rooted squares in a binary word of length $n$ is presented in Table II of~\cite{DBLP:journals/jct/FraenkelS98}. In particular, the word $v=0011$ is a word of minimal length containing 2 distinct primitive rooted squares ($00$ and $11$), while the word $w=001011$ is a word of minimal length containing 3 distinct primitive rooted squares ($00$, $11$, and $0101$).

So, infinite binary words cannot avoid squares. But there are infinite binary words avoiding overlaps. An example is the Thue--Morse word.

The word $w=001011$ is also an extremal case  for the following well-known result due to Restivo and Salemi~\cite{DBLP:conf/litp/RestivoS84}  (see also~\cite{DBLP:journals/combinatorics/AlloucheCS98}):

\begin{theorem}
 If a binary word $u$ is overlap-free,  then  there  exist $x,y,z$ with $x,z\in\{\varepsilon,0,1,00,11\}$ and $y$  overlap-free word,  such  that $u=x\tau(y)z$, where $\tau$ is the Thue--Morse morphism.  Furthermore this  decomposition is unique if $|u|\geq 7$, and $x$ (resp.~$z$) is completely determined by the prefix (resp.~suffix) of length $7$ of $u$.  The bound $7$ is sharp as shown by the example $w=001011 = 00\tau(1)11 = 0\tau(00)1$.
\end{theorem}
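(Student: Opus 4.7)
The plan is to prove existence by parsing $u$ using the block structure of $\tau$, and uniqueness by showing that, for $|u|\geq 7$, the positions where $u_i = u_{i+1}$ pin down the parity of $|x|$.

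For existence, the key observation is that every block $\tau(y_j) \in \{01, 10\}$ contains no two equal consecutive letters, so in any word $\tau(y) = \tau(y_1)\tau(y_2)\cdots$ the \emph{doubled positions} -- namely indices $i$ with $u_i = u_{i+1}$ -- occur only at block boundaries, i.e.\ at even offsets within $\tau(y)$. The case $|u| \leq 4$ is verified directly. For $|u| \geq 5$, the word $u$ contains a doubled position, because otherwise $u$ would be a prefix of $(01)^{\omega}$ or $(10)^{\omega}$ of length at least $5$, and hence would contain the overlap $01010$ or $10101$. In any candidate decomposition $u = x\tau(y)z$ with $|x|, |z| \leq 2$, any doubled position $i$ lying strictly inside $\tau(y)$ satisfies $i \equiv |x| \pmod 2$, so the parity of $|x|$ is forced. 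Given this parity, $x$ is read off from the prefix of $u$: if $|x|$ is even, set $x = u_1u_2$ when $u_1u_2 \in \{00, 11\}$ and $x = \varepsilon$ otherwise; if $|x|$ is odd, set $x = u_1$. The word $z$ is determined symmetrically, and the central portion is parsed greedily into consecutive blocks of length $2$ from $\{01, 10\}$, each interpreted as $\tau(0)$ or $\tau(1)$, to recover $y$. Finally, $y$ is overlap-free because $\tau$ preserves overlap-freeness in both directions: any overlap in $y$ would produce an overlap in $\tau(y)$, hence in $u$.

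For uniqueness when $|u| \geq 7$, if two valid decompositions $u = x\tau(y)z = x'\tau(y')z'$ share the parity of $|x|$, then the explicit recipe above forces $x = x'$ and $z = z'$, whence $y = y'$. If the parities differ, a finite case analysis over the four admissible pairs $(|x|,|z|) \in \{(0,1),(2,1),(1,0),(1,2)\}$ at length $7$ shows that the two parity-dependent ``forbidden'' sets of doubled positions jointly cover $\{2,3,4,5\}$, so any doubled position outside $\{1, 6\}$ fixes the parity uniquely. The residual case $D \subseteq \{1, 6\}$ forces $u$ to be alternating at positions $2,\ldots,6$, so $u$ would contain $01010$ or $10101$, contradicting overlap-freeness. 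The analogous reasoning on the suffix of length $7$ fixes $z$. Sharpness is witnessed by $w = 001011$: its doubled positions $\{1, 5\}$ sit exactly at the extremes, and indeed both $00 \cdot \tau(1) \cdot 11$ and $0 \cdot \tau(00) \cdot 1$ are valid.

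The structural ingredients -- doubled positions in $\tau(y)$ appearing only at even offsets and $\tau$ preserving overlap-freeness -- are standard. The main obstacle is the finite case analysis pinning down the sharp bound $7$: one has to check, across the admissible pairs $\bigl((|x|,|z|),(|x'|,|z'|)\bigr)$ with opposite parities, that the intersection of allowed configurations shrinks to exactly $D \subseteq \{1,|u|-1\}$ precisely when $|u| \geq 7$. The no-$01010$/$10101$ rule then closes out this last case, whereas at $|u| = 6$ exactly these configurations survive and produce the genuine ambiguity displayed by $w$.
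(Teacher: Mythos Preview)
The paper does not prove this theorem at all: it is quoted as a classical result of Restivo and Salemi (with a pointer to Allouche--Currie--Shallit for the sharp form), and no argument is given beyond exhibiting the ambiguous example $w=001011$. So there is no ``paper's own proof'' to compare against; your write-up is being measured against the literature, not against anything in this article.

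On the substance of your sketch: the uniqueness half is essentially right, but the existence half has a real gap. You correctly observe that inside any image $\tau(y)$ the doubled positions sit at even offsets, and you correctly argue that an overlap-free word of length $\ge 5$ must contain at least one doubled position. But from there you write ``the parity of $|x|$ is forced'' and then ``the central portion is parsed greedily into consecutive blocks of length~$2$ from $\{01,10\}$'' as if this were automatic. It is not. What you have shown is a \emph{necessary} condition on the parity of $|x|$ \emph{given} that a decomposition exists and that the chosen doubled position happens to lie inside $\tau(y)$; you have not shown that picking a parity and applying your recipe actually yields a middle segment in $\{01,10\}^*$. Concretely, nothing in your argument rules out a second doubled position of the opposite parity sitting well inside the word (not at position $1$ or $|u|-1$), which would make the greedy parse fail. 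The classical proofs handle exactly this point via a synchronization lemma: in an overlap-free binary word, two interior occurrences of $aa$ and $bb$ cannot start at positions of opposite parity, because the intervening alternating segment together with the two doublings would force a factor of the form $aabab\cdots aba a$ or a short cube, eventually producing an overlap. That lemma is the engine of the Restivo--Salemi factorization, and it is precisely what your sketch omits. Until you prove it (or an equivalent statement), the existence direction is incomplete.
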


Currie and Rampersad~\cite{DBLP:journals/dmtcs/CurrieR10} proved that it is possible to construct an infinite binary word  avoiding cubes but containing exponentially many distinct square factors. 
They considered the (uniform) cube-free morphism
\begin{align*}
 0 & \mapsto 001011\\
 1 & \mapsto 001101\\
 2 & \mapsto 011001
\end{align*}

Notice that the morphism above maps  $0$ to $w$,  $2$ to a rotation of $w$, and  $1$ to a rotation of the complement\footnote{The complement of a binary word is the word obtained by exchanging $0$s and $1$s.} of $w$.

Recently, Dvořáková et al.~\cite{DOO24} proved that applying the $7$-uniform morphism
\begin{align*}
 0 & \mapsto 0001011\\
 1 & \mapsto 1001011
\end{align*}
that is, the morphism that maps $0$ to $0w$ and $1$ to $1w$, to  any  binary $\frac{7}{3}^+$-free  word  (i.e., a word such that no factor has exponent larger than $7/3$, where the exponent is defined as the ratio between the length and the minimum period) gives  a  cube-free  binary  word  containing  at  most  $13$ palindromes, which is the least number of distinct palindromes a binary cube-free word can have.

\bigskip

The word $v=0011$ is also an \emph{anti-square}, i.e., a word of the form $u\overline{u}$, where $\overline{u}$ is the complement of $u$; while $w=001011$ is not.
In particular, $v=0011$ is also a  \emph{minimal} anti-square, that is, an anti-square that does not properly contain any anti-square factor, except possibly $01$ and $10$. Minimal anti-squares have been characterized in \cite{anti}. In the same paper, the authors proved that a binary word that does not contain any anti-square factor, except possibly $01$ and $10$, and has length at least $8$, must contain $w=001011$, or its complement $\overline{w}$, as a factor. 

\bigskip

A different kind of repetition is the notion of a \textit{run} (or \textit{maximal repetition}). A pair $(i,j)$ is a run in a word $u=u[1..n]$, $1\leq i<j\leq n$, if the exponent of $u[i..j]$ is at least $2$ and is smaller than both the exponents of $u[i-1..j]$ and $u[i..j+1]$, if these are defined. For example, the runs of $w=001011$ are $(1,2)$, $(2,5)$ and $(5,6)$. Runs are particularly important in text processing, since they allow the design of efficient algorithms that process separately the repetitive and the non-repetitive portions of a string. 

It was conjectured in \cite{DBLP:conf/focs/KolpakovK99}, and then proved in \cite{DBLP:journals/siamcomp/BannaiIINTT17}, that the number of runs in a word of length $n$ is less than $n$. 

In \cite{DBLP:conf/focs/KolpakovK99}, the authors proved that the total sum of exponents of runs in a word $u$, noted $\sigma(u)$, is linear in the length of $u$. 
For example, the maximal value of $\sigma$ for a word of length $4$ is $4$, and this is realized by the word $v=0011$, which has two runs of exponent $2$, namely $(1,2)$ and $(3,4)$; while the maximal value of $\sigma$ for a word of length $6$ is $6$, and this is realized by the word $w=001011$, which has three runs of exponent $2$. 

However, one can have $\sigma(u)>|u|$ for larger values of $|u|$. For example, take the word $0010100101$, of length $10$. It has runs $(1,10)$, $(1,2)$, $(4,9)$, $(6,7)$, $(7,10)$, of exponent $2$; and $(2,6)$, of exponent $5/2$. So, $\sigma(u)=25/2$. No other word of length $10$ has a larger value of $\sigma$.

\bigskip

Recall that a \emph{Dyck word} is a binary word that, considering $0$ as a left parenthesis and $1$ as a right parenthesis, represents a string of balanced parentheses.

The words $v=0011$ and $w=001011$ are both Dyck words.

Consider the morphism $\mu$:
\begin{align*}
 0 & \mapsto 01\\
 1 & \mapsto 0011\\
 2 & \mapsto 001011
\end{align*}
i.e., the morphsim that maps $0$ to $01$, $1$ to $v$ and $2$ to $w$. Mol, Rampersad and Shallit \cite{DBLP:conf/cwords/MolRS23} proved that a binary word is an overlap-free Dyck word if and only if  it is of the form either $\mu(x)$ for a square-free word $x$ over $\{0,1,2\}$ that contains no $212$ or $20102$, or of the form $0\mu(x)1$, where $x$ is square-free word over $\{0,1,2\}$ that begins with $01$ and ends with $10$, and contains no $212$ or $20102$.

%


\bigskip

The  \emph{perfect shuffle} of two words of the same length  $x=x_1x_2\cdots x_n$ and $y=y_1y_2\cdots y_n$  is the word $x\shuffle y=x_1y_1x_2y_2\cdots x_ny_n$.

Guo, Shallit and Shur \cite{DBLP:journals/corr/GuoSS15} observed that a word is an antipalindrome if and only if it is of the form $\overline{x}\shuffle \tilde{x}$. For example, $v=0011=01\shuffle 01=\overline{10}\shuffle \widetilde{10}$ and $w=001011=011\shuffle 001=\overline{100}\shuffle \widetilde{100}$.

But the structure of the word $w=001011$ in terms of the perfect shuffle operator can be further specialized. In fact,  $w=001011$ satisfies the equation $xy=y\shuffle x$. Actually, it is the shortest word with two different characters doing so. 

The \emph{ordinary shuffle} of two words $x$ and $y$ is the set of words obtainable from merging the words $x$ and $y$ from left to right, but choosing the next
symbol arbitrarily from $x$ or $y$. More formally, the ordinary shuffle of $x$ and $y$ is the set
$x\bigshuffle y = \{z \mid z = x_1y_1 x_2y_2 \cdots x_ny_n \mbox{ for some } n \geq 1 \mbox{ and words } x_1, \ldots, x_n, y_1, \ldots, y_n \mbox{ such that } x = x_1 \cdots x_n \mbox{ and } y = y_1 \cdots y_n\}$.

A word that belongs to $x\bigshuffle x$ for some word $x$ is called a \emph{shuffle square}. Since $v\in 01\bigshuffle 01$, $v$ is a shuffle square; while $w$ is not a shuffle square. Actually, $v=0011$ is the shortest Dyck shuffle square. 

Deciding whether a binary word is a shuffle square is not an easy task. Indeed, Bulteau and Vialette~\cite{DBLP:journals/tcs/BulteauV20} proved that this problem is NP-hard. Recently, He et al.~\cite{DBLP:journals/ejc/HeHNT24} proved that for every $n\geq 3$, the number of binary shuffle squares of length $2n$ is strictly larger than $2n \choose n$.

Words belonging to $x\bigshuffle \tilde{x}$ for some word $x$, instead, are called \emph{reverse shuffle squares}. Henshall, Rampersad, and Shallit~\cite{DBLP:journals/eatcs/HenshallRS12} proved that binary reverse shuffle squares are precisely the binary \emph{abelian squares}, i.e., binary words of the form $uu'$ where $u'$ is an anagram of $u$. 

Neither $v=0011$ nor $w=001011$ is an abelian square. But there is a rotation of $v$ that is an abelian square ($0110$), while no rotation of $w$ is an abelian square. This is because, in general, one has the following property: a binary word has at least one rotation (including the word itself) that is an abelian square if and only if  it has an even number of $0$'s and  an even number of $1$'s (a word in which all letters occur an even number of times is sometimes called a \emph{tangram}).


\section{Lyndon and de Bruijn Words}\label{sec:Lyn}

A \emph{Lyndon word} is a primitive word that is lexicographically smaller than all its rotations (or, equivalently, lexicographically smaller than all its proper suffixes). Here I use the order $0<1$.

The words $v=0011$ and $w=001011$ are both Lyndon words.

Moreover, the word $v=0011$ is the shortest binary word that has $2$ different factorizations in two Lyndon words: $0\cdot 011$ and $001\cdot 1$; while the word $w=001011$ is the shortest binary word that has $3$ different factorizations in two Lyndon words: $0\cdot 01011$, $001\cdot 011$, and $00101\cdot 1$ (cf.~\cite{DBLP:journals/dm/Melancon00,DBLP:conf/soda/BassinoCN04}). 

In fact, one has:

\begin{proposition}\label{hmld}
 For every $n\geq 3$, the shortest binary word that has $n$ distinct factorizations in two Lyndon words is the word $00(10)^{n-2}11$, of length $2n$.
\end{proposition}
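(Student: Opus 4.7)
My plan is to first characterize the split positions $k \in \{1, \ldots, |u|-1\}$ for which both $u[1..k]$ and $u[k+1..|u|]$ are Lyndon (call these \emph{good} positions), then to derive from the resulting structural constraints a lower bound $|u| \geq 2n$, and finally to verify that the candidate word $00(10)^{n-2}11$ saturates both the length bound and the count of $n$ factorizations.

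The first step rests on the elementary observation that every binary Lyndon word of length at least $2$ begins with $0$ and ends with $1$, while the only length-$1$ Lyndon words are $0$ and $1$. Applying this to $u[1..k]$ and $u[k+1..|u|]$ yields three mutually exclusive types for a good $k$: a \emph{left boundary} type ($k=1$, forcing $u[1..2]=00$), an \emph{interior} type ($1<k<|u|-1$, forcing $u[k..k+1]=10$), and a \emph{right boundary} type ($k=|u|-1$, forcing $u[|u|-1]=1$). Since $n \geq 3$, at least one interior $k$ must occur, which in turn forces $u[1]=0$ and $u[|u|]=1$; in particular, the right boundary type then requires $u$ to end in $11$.

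For the lower bound I would combine two spacing arguments. First, two interior positions $k < k'$ must satisfy $k' \geq k+2$, since otherwise $u[k+1]$ would have to be simultaneously $0$ (from $k$) and $1$ (from $k'$). Second, the left boundary excludes $k=2$ from being interior (as $u[2]=0$ conflicts with $u[k]=1$), and symmetrically the right boundary excludes $k=|u|-2$. A four-case analysis on which of the two boundary positions are good then shows that the tightest configuration is when both are good, leaving $n-2$ interior positions to fit into $\{3, 4, \ldots, |u|-3\}$ with pairwise spacing at least $2$; this forces $|u| \geq 2n$, whereas every other configuration requires $|u| \geq 2n+1$. In the tight case, the interior positions are pinned to $3, 5, \ldots, 2n-3$, which together with the boundary constraints determines $u$ letter-by-letter as $00(10)^{n-2}11$.

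The remaining verification is that $00(10)^{n-2}11$ really does admit $n$ factorizations into two Lyndon words, at the $n$ odd positions $k=1,3,\ldots,2n-1$. For each such $k$ the prefix $u[1..k]$ is either $0$ or of the form $00(10)^{j}1$, and the suffix $u[k+1..2n]$ is either $1$ or of the form $0(10)^{j}11$; in each case, a direct comparison with the proper suffixes (which start either with $1$ or with $01$ in a lexicographically larger continuation than the $00$- or $01$-prefix of the word itself) confirms the Lyndon property. No even position $k$ is good, since the prefix $u[1..k]$ would end in $0$ and hence fail the Lyndon condition. I expect the main technical point to be the case analysis on boundary positions in the lower bound; once one keeps clear track of when $k=2$ and $k=|u|-2$ are forbidden, the $\geq 2$ spacing rule delivers $|u|\geq 2n$ in one line and simultaneously forces the word to be $00(10)^{n-2}11$ at equality.
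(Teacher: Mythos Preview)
Your argument is correct and rests on the same core observation as the paper's sketch: a binary Lyndon word of length at least $2$ begins with $0$ and ends with $1$, so each interior split position $k$ forces $u[k..k{+}1]=10$. The paper compresses everything into the slogan ``one needs at least $n$ occurrences of $10$,'' which taken literally is false (e.g.\ $w=001011$ has three Lyndon--Lyndon factorizations but only one occurrence of $10$); what you do differently, and correctly, is to separate out the two boundary splits $k=1$ and $k=|u|{-}1$, show that when good they supply the $00$-prefix and $11$-suffix and simultaneously block $k=2$ and $k=|u|{-}2$ from being interior, and then run the $\geq 2$ spacing argument on the remaining $n-2$ interior splits inside $\{3,\ldots,|u|{-}3\}$. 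This four-case analysis is exactly what is needed to turn the paper's one-line hint into a proof, and it additionally yields the uniqueness of $00(10)^{n-2}11$ at length $2n$ and the verification that each of the $n$ odd split positions really gives a Lyndon--Lyndon factorization, neither of which the paper's sketch addresses.
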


\begin{proof}[Sketch of proof]
 A Lyndon word of length $>1$ starts with $0$ and end with $1$. To have $n$ distinct factorizations in two Lyndon words, one needs at least $n$ occurrences of $10$.
\end{proof}


 The \emph{right standard factorization} of a Lyndon word $u$ of length at least $2$ is $u=st$, where $t$ is the lexicographically least proper suffix of $u$ (or, equivalently, the longest proper suffix of $u$ that is a Lyndon word).

For example, the right standard factorization of $v=0011$ is $0\cdot 011$, while that of $w=001011$ is $0\cdot 01011$.

Since the words $s$ and $t$ in the right standard factorization are always Lyndon words (this can be proved by exercise), applying the right standard factorization recursively until one gets words of length $1$ defines the so-called \emph{right Lyndon tree} of a word $u$, i.e., the binary tree whose root is the word $u$, the leaves  are single letters, and the children of a factor $u'$ of length greater than $1$ are the words in the right standard factorization of $u'$.  

There is also a \emph{left standard factorization} of a Lyndon word $u$ (a.k.a.~\emph{Viennot factorization}). It is the factorization $u=st$, where $s$ is the longest proper prefix of $u$ that is a Lyndon word (but in general $s$ is not the lexicographically least proper prefix of $u$, which is always a single letter!). 

The left and right standard factorizations are not the same, in general. For example, the left standard factorization of $v=0011$ is $001\cdot 1$. However, for some Lyndon words the right and the left standard factorizations can coincide yet their Lyndon trees are different (the left Lyndon tree is defined by applying recursively the left standard factorization). The class of binary words for which the right and the left Lyndon tree coincide is precisely the class of primitive lower Christoffel words (i.e., balanced Lyndon words, see below).

\bigskip

A \emph{de Bruijn word} of order $k$ is a  word such that all words of length $k$ occur exactly once in it as (cyclic) factors, i.e., as factors if one concatenates the de Bruijn word with its prefix of length $k-1$.  For example, the word $v=0011$ is a binary de Bruijn word of order $k=2$.
 
 The following famous result is due to Fredricksen and Maiorana~\cite{DBLP:journals/dm/FredricksenM78}:
 
\begin{theorem}\label{fm}
 The lexicographically least de Bruijn word of order $k$ can be obtained by concatenating in lexicographic order the Lyndon words of length dividing $k$.
\end{theorem}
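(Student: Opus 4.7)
The plan is to establish three properties of the word $W_k$ obtained by concatenating, in lex order, all binary Lyndon words of length dividing $k$: that $|W_k| = 2^k$; that every binary word of length $k$ occurs exactly once as a length-$k$ cyclic factor of $W_k$; and that no de Bruijn word of order $k$ is lexicographically smaller.

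First I verify the length. If $L(d)$ denotes the number of binary Lyndon words of length $d$, then M\"obius inversion yields $L(d) = \frac{1}{d}\sum_{e \mid d} \mu(d/e)\, 2^e$, and the classical identity $\sum_{d \mid k} d\, L(d) = 2^k$, obtained by partitioning binary words of length $k$ according to their primitive period (which must divide $k$), shows $|W_k| = 2^k$, matching the required length of a de Bruijn word of order $k$.

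Second, I show that every binary word of length $k$ occurs as a length-$k$ cyclic factor of $W_k$. Since $|W_k| = 2^k$, injectivity of the map ``position $\mapsto$ length-$k$ cyclic factor starting there'' is equivalent to surjectivity. Every binary word $y$ of length $k$ is a rotation of $\ell^{k/d}$ for a unique binary Lyndon word $\ell$ of length $d \mid k$, and these $d$ rotations are all distinct. The structural claim to prove is that for each such $\ell$, the $d$ rotations of $\ell^{k/d}$ appear in $W_k$ as length-$k$ cyclic factors starting at $d$ consecutive positions associated with the occurrence of $\ell$ in $W_k$. The main obstacle is establishing this structural claim: it requires a careful analysis of how consecutive Lyndon words of possibly different lengths are aligned by the lex ordering. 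A classical argument proceeds by induction on $k$ together with case analysis on $d$ and on the lex successor of $\ell$ among Lyndon words of length dividing $k$, exploiting the fact that this successor begins with a prefix matching the relevant rotation of $\ell^{k/d}$, deviating only at the latest possible position.

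Finally, for lex-minimality: any de Bruijn word of order $k$ contains $0^k$ as a (cyclic) factor, so every such word admits at least one linear representative that starts with $0^k$; and any length-$2^k$ binary string starting with $0^k$ is lex-smaller than any such string that does not. Hence the lex-smallest de Bruijn word starts with $0^k$. The word $W_k$ begins with the two lex-smallest Lyndon words of length dividing $k$, namely $0$ and $0^{k-1}1$, and therefore also begins with $0^k$. A standard greedy argument now concludes: the lex-smallest de Bruijn word is characterized as the unique output of the rule ``at each position, append the smallest character that does not cause a length-$k$ factor to repeat,'' and an inductive verification --- using the structural claim from the previous step to track which length-$k$ factors have already appeared --- shows this greedy output coincides with $W_k$.
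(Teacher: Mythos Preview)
The paper does not prove this theorem; it simply attributes it to Fredricksen and Maiorana with a citation. So there is no ``paper's own proof'' to compare against, and your task reduces to whether your outline is itself a proof.

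It is not. Two problems. First, the ``structural claim'' in your second step --- that the $d$ rotations of $\ell^{k/d}$ occur as the length-$k$ cyclic factors starting at the $d$ positions where $\ell$ sits inside $W_k$ --- is the entire content of the Fredricksen--Maiorana theorem, and you do not prove it; you merely describe it as provable by ``a classical argument'' with ``case analysis.'' The assertion that the lex-successor of $\ell$ ``begins with a prefix matching the relevant rotation of $\ell^{k/d}$, deviating only at the latest possible position'' is neither precise nor justified, and getting this right is exactly where the work lies. Second, your greedy characterization in the third step is false as stated: the rule ``append the smallest bit that does not repeat a length-$k$ factor'' can get stuck. For $k=3$, starting from $000$ and preferring $0$ yields $000100$, after which both extensions $0001000$ (suffix $000$) and $0001001$ (suffix $001$) repeat an earlier length-$3$ factor. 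Martin's algorithm that provably avoids getting stuck is \emph{prefer-one} from the all-zero seed (equivalently prefer-zero from the all-one seed), which produces the lexicographically \emph{largest} sequence, not the smallest; transferring this to lex-minimality and then matching it to $W_k$ is additional work you have not done. In short, step~1 is fine, but steps~2 and~3 are a plan rather than a proof, and step~3 rests on an incorrect lemma.
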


So, the word $v=0011$ is the lexicographically least binary de Bruijn word of order $2$, since it  is the concatenation, in lexicographically order, of the Lyndon words of length dividing $2$, i.e., the words $0$, $1$ and $01$.

The lexicographically least binary de Bruijn word of order $3$ is the word $0w1=00010111$. Indeed, it  is the concatenation, in lexicographically order, of the Lyndon words of length dividing $3$, i.e., the words $0$, $001$, $011$ and $1$.

It is known that a binary de Bruijn word of order $k$ cannot be extended to a binary de Bruijn word of order $k+1$, but it can be extended to a binary de Bruijn word of order $k+2$~\cite{DBLP:journals/ipl/BecherH11}. For example, the word $v=0011$, of order $2$, can be extended to the de Bruijn word  $0011001011110100=v\cdot w\cdot \tilde{w}$ of order $4$.

\bigskip

There are several generalizations of de Bruijn words that have been proposed in the literature. One is the following: A generalized de Bruijn word of order $k$  is a  word such that all primitive  words of length $k$ occur exactly once in it  as  (cyclic) factors.

The following result, due to Au~\cite{DBLP:journals/dm/Au15}, is analogous to Theorem~\ref{fm}:

\begin{theorem}
The lexicographically least generalized de Bruijn word of order $k$ can be obtained by concatenating in lexicographically order the Lyndon words of length $k$.
\end{theorem}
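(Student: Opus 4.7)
The plan is to mirror the proof of Theorem~\ref{fm} by Fredricksen and Maiorana, with attention restricted from Lyndon words of length dividing $k$ to Lyndon words of length exactly $k$. Write $B = L_1 L_2 \cdots L_m$ for the concatenation in lexicographic order of all Lyndon words of length $k$. The first step is a counting check: by Witt's formula the number of such Lyndon words is $L_k = \frac{1}{k}\sum_{d\mid k}\mu(d) 2^{k/d}$, whence $|B| = k L_k = \sum_{d\mid k}\mu(d) 2^{k/d}$, which is precisely the number $P_k$ of primitive binary words of length $k$. So $B$ has exactly $P_k$ cyclic factors of length $k$, and pigeonhole reduces the de Bruijn property to showing that every primitive word of length $k$ occurs among them.

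Next I would enumerate the cyclic factors. Setting $L_{m+1} = L_1$, the cyclic factor of length $k$ starting at position $(i-1)k + j + 1$ of $B$ is $c_{i,j} := L_i[j+1..k]\, L_{i+1}[1..j]$ for $1 \le i \le m$ and $0 \le j < k$. For $j=0$ one recovers $L_i$ itself, which is primitive. For $j \ge 1$, $c_{i,j}$ straddles a seam between two consecutive Lyndon words. The crux is the bijection lemma: $(i,j) \mapsto c_{i,j}$ is a bijection onto the set of primitive binary words of length $k$. Given a primitive $u$, one extracts its unique Lyndon representative $L$ together with its rotation offset, then uses the monotonicity of the lex order on Lyndon words of length $k$ together with a Duval-style analysis of the prefix/suffix overlap of $u$ to pinpoint the unique pair $(i,j)$ with $c_{i,j} = u$. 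This is the structural lemma underlying Fredricksen-Maiorana, lightly adapted to the ``length exactly $k$'' setting.

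For lexicographic minimality I would proceed by contradiction. Suppose some generalized de Bruijn word $B'$ of order $k$ is lex-smaller than $B$, and let $p$ be the first position of disagreement, so $B'[p] = 0$ and $B[p] = 1$. The cyclic factor of $B'$ ending at $p$ is $B'[p-k+1..p]$, which differs from the corresponding cyclic factor of $B$ only in its last symbol. A case analysis, indexed by the residue of $p$ modulo $k$ and controlled by the bijection lemma above, shows that this ``flipped'' factor is either non-primitive or has already appeared as a cyclic factor of $B'[1..p-1] = B[1..p-1]$, contradicting the requirement that $B'$ contains each primitive word of length $k$ exactly once. Hence no such $B'$ exists and $B$ is lex-least.

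The main obstacle is the bijection lemma of the second paragraph: the collision-free enumeration of primitive words by the seam factors $c_{i,j}$ is not immediate and hinges on a delicate interplay between the lex order on Lyndon words of length $k$ and rotations within primitive conjugacy classes. I would not expect a shortcut around this lemma, and I would port Fredricksen and Maiorana's combinatorial argument---as refined by Au---essentially verbatim.
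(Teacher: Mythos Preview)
The paper does not prove this theorem; it is quoted as a result of Au with only a citation, so there is no in-paper argument to compare your proposal against.

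Your plan for the de Bruijn property itself---count via Witt's formula so that $|B|=P_k$, then show that the seam factors $c_{i,j}$ biject onto the primitive words of length $k$---matches the shape of Au's proof, and you are right that the bijection lemma is where the real work sits.

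The lex-minimality paragraph, however, has a genuine gap. Your claim is that the single flipped factor $B[p-k+1..p-1]\cdot 0$ must already be non-primitive or a repeat of an earlier factor; this is false. For $k=3$ and $B=001011$, take $p=5$: the flipped factor is $100$, which is primitive and not among the earlier length-$3$ factors $001,010$. For $k=4$ and $B=000100110111$, take $p=7$: the flipped factor is $1000$, again primitive and new. In each case the obstruction only surfaces one or more steps later, when \emph{every} continuation is forced into a non-primitive window or a repeat. This is not an accident: the naive prefer-$0$ greedy rule, which underlies the Fredricksen--Maiorana minimality proof for ordinary de Bruijn words, \emph{fails} in the primitive-only setting (for $k=3$ it produces the prefix $00100$ and then gets stuck at the next position), so the FM minimality argument does not port ``essentially verbatim''. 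Whatever Au does for minimality, it cannot be the one-position flip you describe, and that part of your sketch will need to be reworked.
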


According to the previous theorem, the word $w=001011$ is the lexicographically least generalized binary de Bruijn word of order $3$, since it is the concatenation of the binary Lyndon words of length $3$: $001$ and $011$. The reader can verify that $w$ contains every binary primitive word of length $3$ as a cyclic factor exactly once.

Another generalization of de Bruijn words has been proposed in \cite{DBLP:journals/iandc/GabricHS22}: a binary word of length $n$ is a generalized de Bruijn word if for all $0\leq i\leq n$, the number of cyclic factors of length $i$ is $\min(2^i,n)$. Clearly, when $n$ is a power of $2$, this definition coincides with that of ordinary de Bruijn word. For $n=6$ there are $3$ generalized de Bruijn words, namely $000111$, $w=001011$ and $\tilde{w}=110100$.


\bigskip 

The \emph{Burrows--Wheeler Transform} (BWT) of a word $u$ is the word obtained by concatenating the last characters of the rotations of $u$ sorted in lexicographic order. For example, if $u=0120$, the list of sorted rotations of $u$ is $\{0012,0120,1200,2001\}$, so the BWT of $u$ is $2001$. By definition, the BWT of $u$ is the same as the BWT of any rotation of $u$, so here I consider only the BWT of Lyndon words. 

The BWT of  $v=0011$ is $1010$, while the BWT of $w=001011$ is $101100$, which is a rotation of $w$. Actually, for each $n$, only a few binary Lyndon words of length $n$ (e.g., only 13 for length 20) are rotations of their BWT.

A general combinatorial characterization of binary Lyndon words that are rotations of their BWT is missing, although partial results have been obtained~\cite{DBLP:journals/fuin/MantaciRRRS17}. 

Given a word $u$, its \emph{standard permutation} $\pi_u$ is defined by: $\pi_u(i)<\pi_u(j)$ if $u_i < u_j$, or $u_i = u_j$ and $i < j$. For example, the standard permutation of $101100$ is $ \pi_{101100} =\bigl(\begin{smallmatrix}
    1 & 2 & 3 & 4 & 5 & 6 \\
    4 & 1 & 5 & 6 &  2  & 3
  \end{smallmatrix}\bigr)$.
A word $u$ is the BWT of some word if and only if its standard permutation is cyclic.

Higgins~\cite{DBLP:journals/tcs/Higgins12} observed that a word $u$ is the BWT of a binary de Bruijn word of order $k$ if and only if $\pi_u$ is cyclic and $u=\tau(z)$ for some word $z$ of length $2^{k-1}$, where $\tau$ is the Thue--Morse morphism. Indeed, in a binary de Bruijn word of order $k$, each factor $z$  of length $k-1$ occurs preceded by $0$ and $1$, so in the matrix of sorted rotations, the two consecutive rows starting with $z$ end with $0$ and $1$, hence the BWT of the de Bruijn word $u$ is a word in $\{01,10\}^+$.

For example, $1010=\tau(11)$, and $1010$ is the BWT of a de Bruijn word of order $2$ (namely the de Bruijn word $v=0011$ of order $2$).
The binary words of length $8$ whose standard permutation is cyclic and that are images under $\tau$ of words of length $4$ are $10011010=\tau(1011)$ and $10100110=\tau(1101)$, which are the BWTs, respectively, of the order $3$ de Bruijn words $0w1=00010111$ and $00011101$.

\section{Factors and Scattered Subwords}\label{sec:bal}

A binary word of length $n$ has at most $2^{k+1}-1+\binom{n-k+1}{2}$ distinct factors, where $k$ is  the unique integer such that $2^k+k-1 \leq n \leq 2^{k+1}+k$~\cite{DBLP:journals/gc/Shallit93}. 

Equivalently, the maximum number of distinct factors of a binary word of length $n$  is
\begin{equation}\label{eq:maxbinary}
d(n)=\sum_{i=0}^n\min(2^i,n-i+1)
\end{equation}
and for each $n$ there are binary words realizing this bound.

\begin{table}[ht]
\begin{center}
\begin{tabular}{l!{\hspace{0.1em}}*{21}{wr{1.75em}}}
$n$ & 1 & 2& 3& 4& 5& 6& 7& 8& 9& 10& 11 & 12 & 13 &14 &15
\\
\hline 
$d(n)$ & $2$& $4$& $6$& $9$& $13$& $17$& $22$& $28$& $35$& $43$ & $51$ & $60$ & $70$ & $81$  & $93$ 
\\[2mm]
\end{tabular}
\caption{The maximum number of distinct factors of a word of length $n$.}
\end{center}
\end{table}

The words $v=0011$ and $w=001011$ both have the maximum number of distinct factors a word of the same length can have. The word $v$ has $9$ distinct factors: $\epsilon$, $0$, $00$, $001$, $0011$, $01$, $011$, $1$, and $11$; while $w$ has $17$ distinct factors:  $\epsilon$, $0$, $00$, $001$, $0010$, $00101$, $001011$, $01$, $010$, $0101$, $01011$, $011$, $1$, $10$, $101$, $1011$, and $11$.
 
\bigskip

Given a word $u$ of length $n$, a set $A\subseteq \{1,\ldots,n\}$ is an \emph{attractor} for $u$ if every factor of $u$ has at least one occurrence in $u$ crossing a position in $A$~\cite{DBLP:journals/corr/abs-1709-05314}. For example, $A=\{2,3\}$ is an attractor of $v=0011$. Moreover, $A$ is minimal, since if a word contains two different characters, all its attractors must have cardinality at least $2$. The shortest binary word having no attractor of size $2$ is $w=001011$, up to mirror image. 


\bigskip

A factor $x$ of a word $u$ is \emph{left} (resp.~\emph{right}) \emph{special} if $0x$ and $1x$ (resp.~$x0$ and $x1$) are factors of $u$; it is \emph{bispecial} if it is both left and right special.

For example, the only bispecial factor of $v=0011$ is $\epsilon$; while  the bispecial factors of $w=001011$ are: $\epsilon$, $0$, $01$, and $1$.

A word of length $n$ can have at most $n-2$ distinct bispecial factors~\cite{Del99,DBLP:journals/tcs/CarpiL01,DBLP:journals/ipl/Raffinot01}. 

Let me call a binary word of length $n$ \emph{highly bispecial} if it has the maximum number of distinct bispecial factors among the binary words of length $n$.

For example, for every $n\geq 4$, words of length $n$ with exactly $n-2$ distinct bispecial factors are $001^{n-3}0$ and $01^{n-2}0$.


\begin{proposition}
 For every $n\geq 4$, the lexicographically smallest highly bispecial word of length $n$ is $001^{n-3}0$, with the exception of $n=6$, for which the lexicographically smallest highly bispecial word is $w=001011$.
\end{proposition}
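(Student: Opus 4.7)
My proof plan has three parts, corresponding to showing that $001^{n-3}0$ is highly bispecial, handling the exception $n = 6$, and ruling out smaller candidates for $n \neq 6$.

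First, I would enumerate the bispecial factors of $u = 001^{n-3}0$: the empty word $\epsilon$ (both letters occur), the letter $0$ (with left extensions $00,10$ from the initial $00$ and the terminal $10$, and right extensions $00,01$ from the initial $00$ and the central $01$), and each power $1^k$ for $1 \leq k \leq n-4$ (each lies strictly inside the internal $1$-run at positions $3$ through $n-1$, so admits both letters on either side). The factor $1^{n-3}$ coincides with the whole internal run and is not left-extendable to $1^{n-2}$, so it is not bispecial. The total is $1 + 1 + (n-4) = n-2$, matching the upper bound, so $001^{n-3}0$ is highly bispecial.

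For $n = 6$, note that $w = 001011 < 001110 = 001^{n-3}0$ lexicographically, and check directly that $w$ has exactly the four bispecial factors $\epsilon, 0, 1, 01$; the bispeciality of $01$ follows because $w$ contains all four of $001, 101, 010, 011$ as factors. The $11$ binary words of length $6$ strictly smaller than $w$ all have prefix in $\{000, 00100, 00101\}$ extended to length $6$, and a short case-by-case check shows each has at most three bispecial factors, so $w$ is indeed the lex-smallest highly bispecial word of length $6$.

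For $n \neq 6$, any $u$ with $u < 001^{n-3}0$ first differs from the target at some position $i \in \{3, \ldots, n-1\}$ where the target has a $1$ and $u$ has a $0$, giving $u$ a prefix of the form $001^k 0$ with $k = i - 3 \in \{0, \ldots, n-4\}$. I would then case-split on $k$: for $k = 0$ (prefix $000$) the initial run of three zeros limits the number of achievable bispecials, which I would quantify by bounding the number of bispecial powers $0^j$ and mixed-form factors; for $k \geq 2$ (prefix $001^k 0$) the $1$-runs of $u$ all have length at most $\max(k, n-k-3) < n-3$, so $u$ cannot support the $n-4$ powers-of-$1$ bispecials that $001^{n-3}0$ enjoys, and the mixed bispecial candidates are too few to compensate; for $k = 1$ (prefix $0010$) one argues that $n-2$ bispecials are attainable only when $n = 6$ and $u = w$.

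The main obstacle is this last subcase. Starting with $0010$ leaves the most freedom, and the argument must pin down precisely why $n = 6$ is the unique length at which this prefix allows $n-2$ bispecials. I expect to handle it by showing that a highly bispecial binary word with prefix $0010$ has bispecial factors drawn from $\{\epsilon, 0, 1, 01\}$ plus possibly $\{10, 11, 1^j, \ldots\}$ constrained by the forced $0$ at position $4$, and that these constraints cap the count at $4$; since we need the count to equal $n - 2$, we get $n \leq 6$, so $n = 6$ and the tight instance is $u = w$.
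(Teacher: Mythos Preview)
Your approach differs from the paper's. The paper's sketch asserts the full classification: for $n \neq 6$ the highly bispecial binary words of length $n$ are exactly those of the form $aab^{n-3}a$, $ab^{n-3}aa$, or $ab^{n-2}a$ with $\{a,b\}=\{0,1\}$; for $n=6$ one also has $w$ (and, by complementation, $\tilde w$). From this the lex-minimum is immediate. Your route---verify that $001^{n-3}0$ is highly bispecial, then rule out every lex-smaller word by a prefix case split on $k$---proves less and works harder, though it could in principle be carried out.

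However, your intended resolution of the $k=1$ subcase is wrong. You expect that any word with prefix $0010$ has at most four bispecial factors, whence $n-2\le 4$ and $n\le 6$. But take $u=0010111110$ of length $10$: its bispecial factors are $\epsilon$, $0$, $1$, $01$, $11$, $111$, $1111$, seven in all. The forced $0$ at position $4$ only isolates the \emph{first} $1$ (at position $3$); the tail $u[5..n]$ may still carry a $1$-run of length up to $n-4$, contributing up to $n-5$ bispecial powers of $1$, and $01$ can also be bispecial. So no absolute cap of $4$ exists; a correct argument must show that such tails still fall short of $n-2$ bispecials for $n>6$, which requires a sharper joint count of $1$-powers, $0$-powers, and mixed bispecials. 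The same looseness affects your $k\ge 2$ sketch: bounding the longest $1$-run below $n-3$ gives at most $n-5$ bispecial $1$-powers, but you have not excluded compensation by $0$-powers (e.g.\ $00$ is bispecial in $00111000$) or by mixed factors such as $01$, so the inequality ``$<n-2$'' is not yet established.
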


\begin{proof}[Sketch of proof]
For every $n\neq 6$, there are exactly three binary highly bispecial words of length $n$, namely $aab^{n-3}a$, $ab^{n-3}aa$, and $ab^{n-2}a$. For $n=6$, we have the same words plus $w$.
\end{proof}

\bigskip

A word $x$ is a \emph{minimal forbidden factor} (a.k.a.~\emph{minimal forbidden word} or \emph{minimal absent word}) of a word $u$ if $x$ is not a factor of $u$ but every proper factor of $x$ is. For example, $v=0011$ is a minimal forbidden factor of $w=001011$.
The set of minimal forbidden factors of a word uniquely characterizes it.

The minimal forbidden factors of $v=0011$ are $000$, $10$, and $111$. The word $w=001011$, instead, has 6 minimal forbidden factors: $000$, $0011$, $100$, $1010$, $110$, and $111$;  this is actually the maximum number of minimal forbidden factors a binary word of length $6$ can have. In fact, a binary word of length $n>2$ has at most $n$ distinct  minimal forbidden factors~\cite{DBLP:journals/tcs/MignosiRS02}. 

One may wonder whether the minimal forbidden factors of a palindrome are always palindromes. The answer is no: the shortest palindrome starting with $w=001011$, that is, the word $0010110100$ (see~\cite[Sec.~2.3]{DBLP:conf/cai/Berstel07}), is a palindrome of minimal length having a minimal forbidden factor that is not a palindrome, namely the word $v=0011$ (and its mirror image).

\bigskip

A binary word $u$ is \emph{balanced} if for every pair of factors $x,y$ of $u$ of the same length, the occurrences of $0$ (or, equivalently, of $1$) in $x$ and $y$ differ by at most one. Balanced binary words are precisely the finite factors of Sturmian words (Sturmian words are infinite words with $n+1$ distinct factors of length $n$ for every $n\geq 0$; for more on Sturmian words see, e.g.,~\cite{LothaireAlg}).

Every balanced word is rich~\cite{DelGlZa08}.

A binary word $u$ is unbalanced (i.e., not balanced) if and only if there exists a palindrome $z$ such that $0z0$ and $1z1$ are both factors of $u$~\cite{LothaireAlg}.
So, both $v=0011$ and $w=001011$ are unbalanced (taking $z=\varepsilon$). Therefore, they cannot appear as factors in any Sturmian word. Actually, the shortest unbalanced words are $v$ and its mirror image.

A  \emph{minimal unbalanced word} is an unbalanced word such that all its proper factors are balanced.
The words $v=0011$ and $w=001011$ are minimal unbalanced words.
Minimal unbalanced words have been characterized~\cite{DBLP:journals/jcss/Fici14}:

\begin{proposition}
 A word $u=azb$, $\{a,b\}=\{0,1\}$, is a minimal unbalanced word  if and only if the word $bza$ is a proper power of a Lyndon balanced word or its mirror image.
\end{proposition}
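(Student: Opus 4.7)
My plan is to prove both implications using the Lothaire characterization stated just above (a binary word is unbalanced iff some palindrome $y$ satisfies both $0y0\in\Fact(u)$ and $1y1\in\Fact(u)$), together with the classical fact that the primitive balanced binary words are exactly the conjugates of Christoffel words (so that each such word has a unique Lyndon rotation, which is the Lyndon balanced word).

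For the forward direction, suppose $u=azb$ is minimal unbalanced, and pick a palindrome $y$ with $aya,\,byb$ both factors of $u$. Since the proper factors $az$ and $zb$ are balanced, $aya$ cannot lie entirely inside $zb$ (otherwise $zb$ would be unbalanced), and $byb$ cannot lie entirely inside $az$; hence $aya$ must start at position $1$ of $u$ and $byb$ must end at position $|u|$. Reading off the forced characters, $z$ begins with $ya$ and ends with $by$, so $bza$ has $bya$ as both a prefix and a suffix. This gives $bya$ as a border of $bza$ and a period $p=|u|-|y|-2$. I then need to show (i) that $bza$ is itself balanced and (ii) that $p$ divides $|bza|$, so that $bza=L^k$ with $|L|=p$ and $k\geq 2$. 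Matching endpoints (the first letter of $L$ must be $b$ and the last must be $a$) with the Lyndon convention $0<1$ then singles out either the Lyndon balanced conjugate of $L$ (when $(b,a)=(0,1)$) or its mirror image (when $(b,a)=(1,0)$), yielding the asserted form.

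The main obstacle is establishing (i) and (ii); both rely on the \emph{minimality} of $u$, not merely on the balance of $az$ and $zb$. The argument I have in mind is a Fine--Wilf-style analysis: any imbalance in $bza$, or any strictly shorter period of $bza$ incompatible with its length, would let one exhibit a proper factor of $u$ of length $<|u|$ that still contains both palindromic witnesses $aya$ and $byb$; that factor would itself be unbalanced, contradicting minimality of $u$.

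For the converse, assume $bza=L^k$ with $L$ Lyndon balanced or the mirror of one, and $k\geq 2$. Then $L^\omega$ is a periodic Sturmian word, so $bza$ is balanced. Forming $u$ by swapping the two endpoints of $bza$ plants a palindromic prefix $aya$ and a palindromic suffix $byb$, where $y$ is the internal part of $L$ of length $|L|-2$; this gives the Lothaire witness, so $u$ is unbalanced. For minimality, one verifies that the two proper factors $az$ and $zb$ of length $|u|-1$ are balanced by a direct comparison with $bza=L^k$, using the border $bya$ to show that each single-endpoint modification $bza\to az$ and $bza\to zb$ preserves membership in the class of balanced words. Since every shorter proper factor of $u$ is contained in $az$ or $zb$, minimality is established.
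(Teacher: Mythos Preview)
The paper does not supply a proof of this proposition; it simply attributes the characterization to~\cite{DBLP:journals/jcss/Fici14}. So there is no ``paper's own proof'' to match. Judged on its own, however, your forward direction contains a genuine error in step~(ii). You derive a border $bya$ of $bza$ and hence a period $p=|u|-|y|-2$, and then assert that $p$ divides $|bza|$, giving $bza=L^{k}$ with $|L|=p$. Take $u=w=001011$: here $a=0$, $b=1$, $z=0101$, and the \emph{only} palindrome $y$ with $0y0,1y1\in\Fact(u)$ is $y=\varepsilon$. Your period is $p=6-0-2=4$, while $|bza|=6$, and $4\nmid 6$. In fact $bza=101010=(10)^{3}$, so the primitive root has length $2$, not $p$. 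A single border gives one period; to force a \emph{proper power} you need a second, independent period (or some extra structural input), and minimality of $u$ does not hand you one via the ``Fine--Wilf-style'' sketch you describe.

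Step~(i) has a related gap. You argue that an imbalance in $bza$ would yield a proper unbalanced factor of $u$, but proper factors of $bza$ are factors of $bz$ or of $za$, and neither $bz$ nor $za$ is in general a factor of $u=azb$ (already for $u=0011$ one has $za=010$ and $bz=101$, neither a factor of $0011$). So the reduction you propose does not go through. The actual proof in~\cite{DBLP:journals/jcss/Fici14} relies on the structure theory of bispecial Sturmian factors and central/Christoffel words: once one knows that the palindrome $z$ sandwiched in a minimal unbalanced $azb$ must be a \emph{central word}, the identity $bza=L^{k}$ (with $L$ a Christoffel word or its reversal) follows from the standard Christoffel machinery, not from a bare border/period computation. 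Your converse sketch is closer to workable, but the claim that ``single-endpoint modification preserves balancedness'' also leans on this Christoffel structure (in particular on $y$ being the palindromic central part of $L$), which you use without justification.
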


Since $01$ is a Lyndon balanced word, any word of the form $1(10)^{n-1}0$, $n> 1$, or its mirror image $0(01)^{n-1}1$ is a minimal unbalanced word -- and in this latter case one has the same words of Proposition~\ref{hmld}, since $0(01)^{n-1}1=00(10)^{n-2}11$. 

Lyndon balanced words are also called \emph{lower Christoffel words}. A fundamental property of Lyndon words is the following: if $u$ and $z$ are Lyndon words and $u<z$ (where $<$ denotes the lexicographic order induced by $0<1$) then $uz$ is a Lyndon word. This property is not preserved in the (sub)class of lower Christoffel words, as the following example (see~\cite{LapointePhd}) shows:  $001$ and $011$ are lower Christoffel words, but $001\cdot 011=w$ is not, since it is not balanced. In fact, Borel and Laubie~\cite{JTNB_1993} proved that if $u$ and $z$ are lower Christoffel words and $u<z$, then $uz$ is a lower Christoffel word if and only if 
\[\det
\begin{pmatrix}
|u|_0 &\ |z|_0 \\
|u|_1 &\ |z|_1 
\end{pmatrix}=1.
\]


\bigskip

Let $u$ be a  word of length $n$. A \emph{scattered subword} of length $l$ of
$u$ is any word obtained by concatenating the characters appearing in $l$ distinct positions (even not contiguous). The set of these $l$ positions is called an \emph{embedding} of the scattered subword. For example, 
the scattered subwords of length $4$ of $w=001011$ are $0001$, $0011$, $0111$, and $1011$. The word $v=0011$ is a scattered subword of the word $w=001011$ (this is precisely the example given in~\cite{DBLP:journals/tcs/Metivier85}) and has 5 embeddings in $w$ (see Fig.~1 in\cite{DBLP:conf/lata/BoassonC15}). 

Clearly, every binary word  contains a palindromic scattered subword of length at least half of
its length -- a power of the prevalent character. A  word is called \emph{minimal palindromic} if it contains no palindromic scattered subword longer than half of its length. 
Holub and Saari~\cite{DBLP:journals/dam/HolubS09} proved that minimal palindromic binary words are abelian unbordered, i.e., no prefix has the same number of $0$s as the suffix of the same length.  
The words $v=0011$ and $w=001011$ are minimal palindromic words (and therefore are abelian unbordered). 












\bibliographystyle{abbrv}

\end{document}